\newcommand{\atomicNrm}[1]{\|#1\|_{\mathcal{A}}}
\newcommand{\bbR}{\mathbb{R}}
\numberwithin{equation}{section}
\theoremstyle{plain}%
\newtheorem{theorem}{Theorem}
\numberwithin{theorem}{section}
\newtheorem{lemma}[theorem]{Lemma}
\theoremstyle{remark}
\def\bbR{\mathbb{R}}
\def\cC{{\mathcal C}}
\def\cO{{\mathcal O}}
\DeclareMathOperator*{\argmin}{arg\,min}
\DeclareMathOperator*{\minimize}{minimize}
\newcommand{\lossfn}{\ell}
\newcommand{\weight}{w}
\newcommand{\psf}{\psi}
\newcommand{\nSources}{K}
\newcommand{\reals}{\bbR}
\newcommand{\target}{y}
\newcommand{\fwd}{\Phi}
\newcommand{\dirderiv}[3]{D_{#3}#1(#2)}
\newcommand{\OurAlg}{ADCG}
\newcommand{\FCFW}{CGM-M}
\begin{document}
\title{The Alternating Descent Conditional Gradient Method \\ for Sparse Inverse Problems}
\author{Nicholas Boyd, Geoffrey Schiebinger, and Benjamin Recht}
\maketitle
\begin{abstract}{
We propose a variant of the classical conditional gradient method (CGM) for sparse inverse problems with differentiable measurement models.  Such models arise in many practical problems including superresolution, time-series modeling, and matrix completion.   Our algorithm combines nonconvex and convex optimization techniques: we propose global conditional gradient steps alternating with nonconvex local search exploiting the differentiable measurement model. This hybridization gives  the theoretical global optimality guarantees and stopping conditions of convex optimization along with the performance and modeling flexibility associated with nonconvex optimization.
Our experiments demonstrate that our technique achieves state-of-the-art results in several applications.
}
\end{abstract}

\section{Introduction}
\label{SecIntro}

A ubiquitous prior in modern statistical signal processing asserts that an observed signal is the noisy measurement of a few weighted sources.
In other words, compared to the entire dictionary of possible sources, the set of sources actually present is \emph{sparse}.
In the most abstract formulation of this prior, each source is chosen from a non-parametric dictionary, but in many cases of practical interest the sources are parameterized.
Hence, solving the sparse inverse problem amounts to finding a collection of a few parameters and weights that adequately explains the observed signal.

As a concrete example, consider the idealized task of identifying the aircraft that lead to an observed radar signal.
The sources are the aircraft themselves, and each is parameterized by, perhaps, its position and velocity relative to the radar detector.
The sparse inverse problem is to recover the number of aircraft present, along with each of their parameters.

Any collection of weighted sources can be represented as a measure on the parameter space: each source corresponds to a single point mass at its corresponding parameter value.
We will call atomic measures supported on very few points \emph{sparse} measures.
When the parameter spaces are infinite---for example the set of all velocities and positions of aircraft---the space of sparse measures over such parameters is infinite-dimensional.
This means that optimization problems searching for parsimonious explanations of the observed signal must operate over an infinite-dimensional space.

Many alternative formulations of the sparse inverse problem have been proposed to avoid the infinite-dimensional optimization required in the sparse measure setup.
The most canonical and widely applicable approach is to form a discrete grid over the parameter space and restrict the search to measures supported on the grid.
This restriction produces a finite-dimensional optimization problem~\cite{spgl1, Malioutov, JustDiscretize}.
In certain special cases, the infinite-dimensional optimization problem over measures can be reduced to a problem of moment estimation, and spectral techniques or semidefinite programming can be employed~\cite{Lieven, Prony, OffTheGrid, CandesGranda}.
More recently, in light of much of the work on compressed sensing and its generalizations, another proposal operates on atomic norms over data~\cite{CGLIP}, opening other algorithmic possibilities.

While these finite-dimensional formulations are appealing, they all essentially treat the space of sources as an unstructured set, ignoring natural structure (such as differentiability) present in many applications. 
All three of these techniques have their individual drawbacks, as well.
Gridding only works for very small parameter spaces, and introduces artifacts that often require heuristic post-processing~\cite{JustDiscretize}.  Moment methods have limited applicability, are typically computationally expensive, and, moreover, are sensitive to noise and estimates of the number of sources.
Finally, atomic norm techniques do not recover the parameters of the underlying signal, and as such are more naturally applied to denoising problems.

In this paper, we argue that all of these issues can be alleviated by returning to the original formulation of the estimation problem as an optimization problem over the space of measures.
Working with measures explicitly exposes the underlying parameter space, which allows us to consider algorithms that make local moves within parameter space.
We demonstrate that operating on the infinite-dimensional space of measures is not only feasible algorithmically, but that the resulting algorithms outperform techniques based on gridding or moments on a variety of real-world signal processing tasks.  We formalize a general approach to solving parametric sparse inverse problems via the conditional gradient method (CGM), also know as the Frank-Wolfe algorithm.  In \S\ref{SecAlgorithms}, we show how to augment the classical CGM with nonconvex local search exploiting structure in the parameter space.
This hybrid scheme, which we call the alternating descent conditional gradient method (\OurAlg), enjoys both the rapid local convergence of nonconvex programming algorithms and the stability and global convergence guarantees associated with convex optimization.
The theoretical guarantees are detailed in \S\ref{SecTheory}, where we bound the convergence rate of our algorithm and also guarantee that it can be run with bounded memory.
Moreover, in \S\ref{SecNumerics} we demonstrate that our approach achieves state-of-the-art performance on a diverse set of examples.

\subsection{Mathematical setup}
In this subsection we formalize the sparse inverse problem as an optimization problem over measures and discuss a convex heuristic.

We assume the existence of an underlying collection of objects, called sources.
Each source has a nonnegative weight $w > 0$, and a parameter $\theta \in \Theta$.
An element $\theta$ of the parameter space $\Theta$ may describe, for instance, the position, orientation, and polarization of a source. The weight may encode the intensity of a source, or the distance of a source from the measurement device.  Our goal is to recover the number of sources present, along with their individual weights and parameters.
We do not observe the sources directly, but instead are given a single, noisy measurement in $\reals^d$.

The measurement model we use is completely determined by a function $\psf: \Theta \to \reals^d$, which gives the $d$-dimensional measurement of a single, unit-weight source parameterized by a point in $\Theta$.
The measurement of a lone source is homogeneous of degree one in its weight; that is, a single source with parameter $\theta$ and weight $w > 0$ generates the measurement $w\psf(\theta) \in \reals^d$.
Finally, we assume that the measurement of a weighted collection of sources is additive.
In other words, the (noise-free) measurement of a weighted collection of sources, $\{ (\weight_i,\theta_i)\}_{i=1}^\nSources$, is simply

\begin{equation}
\label{EqnForwardOperator} \sum_{i=1}^\nSources \weight_i \psi(\theta_i) \in \reals^d.
\end{equation}
We refer to the collection $\{ (\weight_i,\theta_i)\}_{i=1}^\nSources$ as the \emph{signal parameters}, and the vector $\sum_{i=1}^\nSources \weight_i \psi(\theta_i) \in \reals^d$ as the noise-free {\em measurement}.

We can encode the signal parameters as an atomic measure $\mu$ on $\Theta$, with mass $w_i$ at point $\theta_i$:
$\mu = \sum_{i=1}^\nSources w_i \delta_{\theta_i}$.
As a consequence of the additivity and homogeneity in our measurement model, the total measurement of a collection of sources encoded in the measure $\mu$ is a linear function $\fwd$ of $\mu$:
\[
\fwd \mu  = \int \psf(\theta) d\mu(\theta).
\]
We call $\Phi$ the \emph{forward operator}.
For atomic measures of the form $\mu = \sum_{i=1}^n \weight_i \delta_{\theta_i}$, this clearly agrees with \eqref{EqnForwardOperator}; but it is
defined for all measures on $\Theta$.

\newcommand{\trueMeasure}{\mu_\mathrm{true}}
\newcommand{\supp}{\textrm{supp}}

We now introduce the sparse inverse problem as an optimization problem over measures.
Our goal is to recover $\trueMeasure$ from a measurement
$$y=\fwd \trueMeasure + \nu$$ corrupted by a noise term, $\nu$. 
Recovering the signal parameters without any prior information is, in most interesting problems, impossible; the operator $\Phi$ is almost never injective.
However, in a sparse inverse problem we have the prior belief that the number of sources present, while still unknown, is small. 
That is, we can assume that $\trueMeasure$ is an atomic measure supported on very few points.

To make the connection to compressed sensing clear, we refer to such measures as {\em sparse} measures.  Note that while we are using the language of \emph{recovery} or {\em estimation} in this section, the optimization problem we introduce is also applicable in cases where these may not be a true measure underlying the measurement model.  In~\S\ref{SecExamples} we give several examples that are not recovery problems. 

We estimate the signal parameters encoded in $\trueMeasure$ by minimizing a convex loss $\lossfn$ of the residual between $y$ and $\fwd \mu$:
\begin{equation}
\label{actualproblem}
\begin{aligned}
&\text{minimize}&& \lossfn \left(\fwd \mu - \target\right) \\
&\text{subject to}&&\mu \ge 0 \\
&&&|\textrm{supp}(\mu)| \le N, \\
\end{aligned}
\end{equation} where the optimization is over the infinite-dimensional space of measures on $\Theta$.  For example, when $\ell$ is the negative log-likelihood of the noise term $\nu$, problem \eqref{actualproblem} corresponds to maximum likelihood estimation of $\trueMeasure.$
Here $N$ is a posited upper bound on the size of the support of the true measure $\trueMeasure$, which we denote by $\textrm{supp}(\trueMeasure)$.  Although here and elsewhere in the paper we explicitly enforce the constraint that the measure be nonnegative, all of our discussion and algorithms can be easily extended to the unconstrained case.

While the objective function in \eqref{actualproblem} is convex, the constraint on the support of $\mu$ is nonconvex.  A common heuristic in these situations is to replace the nonconvex constraint with a convex surrogate. The standard surrogate for a cardinality constraint on a nonnegative measure is a constraint on the total mass. This substitution results in the standard convex approximation to~\eqref{actualproblem}:
\begin{equation}
\label{cvxproblem}
\begin{aligned}
&\minimize &&\lossfn \left(\fwd \mu - y\right)  \\
&\text{subject to} &&  \mu \ge 0 \\
&&&\mu(\Theta) \le \tau.\\
\end{aligned}
\end{equation}
\noindent Here $\tau > 0$ is a parameter that controls the total mass of $\mu$ and empirically controls the cardinality of solutions to~\eqref{cvxproblem}.
While problem~\eqref{cvxproblem} is convex, it is over an infinite-dimensional space, and it is not possible to represent an arbitrary measure in a computer.
A priori, an approximate solution to~\eqref{cvxproblem} may have arbitrarily large support, though we prove in~\S\ref{SecTheory} that we can always find solutions supported on at most $d+1$ points.
In practice, however, we are interested in approximate solutions of \eqref{cvxproblem} supported on far fewer than $d+1$ points.

A celebrated example of~\eqref{cvxproblem} occurs when $\Theta$ is the finite set $\{1, \ldots, k\}$ and $\ell(r) = \frac{1}{2}\| r \|_2^2$. In that case, a nonnegative measure over $\Theta$ can be represented as a vector $v$ in $\reals_+^k$ and the forward operator $\fwd$ as a matrix in $\reals^{d \times k}$. The total mass of the nonnegative measure $v$ is then simply $\sum_i v_i = \|v\|_1$. In this case, \eqref{cvxproblem} reduces to the nonnegative lasso.  

In this paper, we propose an algorithm for the substantially different case where $\Theta$ has some differential structure.
Our algorithm is based on a variant of the conditional gradient method that takes advantage of the differentiable nature of $\psf$, and is guaranteed to produce approximate solutions with bounded support.

\subsection{Relationship to atomic norm problems}
\label{atomicnorms}
Problems similar to~\eqref{cvxproblem} have been studied through the lens of atomic norms~\cite{CGLIP}.  
The atomic norm $\atomicNrm{\cdot}$ corresponding to a suitable collection of atoms $\mathcal{A} \subset \reals^d$
is defined as 
\[ \atomicNrm{x} = \inf \left\{ \sum_{a \in \mathcal{A}} c_a : x = \sum_{a \in \mathcal{A}}  c_a a,~c_a \ge 0 \right\}. \]
The connection to \eqref{cvxproblem} becomes clear if we take $\mathcal{A} = \{ \psi(\theta) : \theta \in \Theta\} \cup \{0\}$. 
With this choice of atomic set, we have the equality 
\[ \atomicNrm{x} = \inf \left\{ \mu(\Theta) : x = \int \psf(\theta) d\mu(\theta),~ \mu \ge 0 \right\}. \]
This equality implies the equivalence (in the sense of optimal objective value) of the infinite-dimensional optimization problem \eqref{cvxproblem} to the finite-dimensional atomic norm problem:
\begin{equation}
\label{atomicproblem}
\begin{aligned}
&\minimize &&\lossfn \left(x - y\right)  \\
&\text{subject to } && \|x\|_\mathcal{A} \le \tau. \\
\end{aligned}
\end{equation}

Much of the literature on sparse inverse problems focuses on problem \eqref{atomicproblem}, as opposed to the infinite-dimensional problem~\eqref{cvxproblem}. 
This focus is due to the fact that~\eqref{atomicproblem} has algorithmic and theoretical advantages over~\eqref{cvxproblem}.
First and foremost,~\eqref{atomicproblem} is finite-dimensional, which means that standard convex optimization algorithms may apply.
Additionally, the geometry of the atomic norm ball, $\textrm{conv} \{ \psi(\theta) : \theta \in \Theta\}$, gives clean geometric insight into when the convex heuristic will work~\cite{CGLIP}. 

With that said, we hold that the infinite-dimensional formulation we study has distinct practical advantages over the atomic norm problem~\eqref{atomicproblem}.
In many applications, it is the atomic decomposition that is of interest, 
and {\em not} the optimal point $x_\star$ of~\eqref{atomicproblem};
reconstructing the optimal $\mu_\star$ for problem~\eqref{cvxproblem} from $x_\star$ can be highly nontrivial. 
For example, when designing radiation therapy, the measure $\mu_\star$ encodes the optimal beam plan directly, while the vector $x_\star = \Phi\mu_\star$ is simply the pattern of radiation that the optimal plan produces. 
For this reason, an algorithm that simply returns the vector $x_\star$, without the underlying atomic decomposition, is not always useful in practice. 

Additionally, the measure-theoretic framework exposes the underlying parameter space, which in many applications comes with meaningful and useful structure---and is oftentimes more intuitive for practitioners than the corresponding atomic norm.
Na\"ive interpretation of the finite-dimensional optimization problem treats the parameter space as an unstructured set.
Keeping the structure of the parameter space in mind makes extensions such as \OurAlg~that make local movements in parameter space natural and uniform across applications.  

\section{Example applications}
\label{SecExamples}

Many practical problems can be formulated as instances of \eqref{cvxproblem}. In this section we briefly outline a few examples to motivate our study of this problem.

\paragraph{Superresolution imaging.}

The diffraction of light imposes a physical limit on the resolution of optical images. 
The goal of superresolution is to remove the blur induced by diffraction as well as the effects of pixelization and noise.
For images composed of a collection of point sources of light, this can be posed as a sparse inverse problem as follows.
The parameters $\theta_1,\ldots,\theta_\nSources$ denote the locations of $\nSources$ point sources (in $\bbR^2$ or $\bbR^3$), and $\weight_i$ denotes the intensity, or brightness, of the $i$th source.  
The image of the $i$th source is given by $w_i\psf(\theta_i)$, where $\psf$ is the pixelated point spread function of the imaging apparatus.

By solving a version of~\eqref{cvxproblem} it is sometimes possible to localize the point sources better than the diffraction limit---even with extreme pixelization.
Astronomers use this framework to deconvolve images of stars to angular resolution below the Rayleigh limit~\cite{astro}.
In biology this tool has revolutionized imaging of subcellular features~\cite{natureMethods, STORM}.
A variant of this framework allows imaging through scattering media~\cite{Liu}.
In~\S\ref{SecSMI}, we show that our algorithm improves upon the current state of the art for localizing point sources in a fluorescence microscopy challenge dataset.

\paragraph{Linear system identification.}
Linear time-invariant (LTI) dynamical systems are used to model many physical systems.
Such a model describes the evolution of an output $y_t\in \bbR$ based on the input $u_t\in\bbR$, where $t \in \mathbb{Z}_+$ indexes time.  The internal state at time $t$ of the system is parameterized by a vector $x_t\in \bbR^m$, and its relationship to the output is described by
\begin{align*}
x_{t+1} &= Ax_t + Bu_t\\
y_t &= Cx_t.
\end{align*}
Here $C$ is a fixed matrix, while $x_0, A$, and $B$ are unknown parameters.

Linear system identification is the task of learning these unknown parameters from input-output data---that is a sequence of inputs $u_1, \ldots, u_T$ and the observed sequence of outputs $y_1, \ldots, y_T$~\cite{SysID, Lieven}.
We pose this task as a sparse inverse problem.
Each source is a small LTI system with 2-dimensional state---the measurement model gives the output of the small system on the given input.
To be concrete, the parameter space $\Theta$ is given by tuples of the form $(x_0, r, \alpha, B)$ where $x_0$ and $B$ both lie in the $\ell_\infty$ unit ball in $\reals^2$, $r$ is in $[0,1]$, and $\alpha$ is in $[0,\pi]$.
The LTI system that each source describes has
$$A = r \begin{bmatrix}
    \cos(\alpha) & -\sin(\alpha) \\
 \sin(\alpha) & \cos(\alpha)
  \end{bmatrix}, \qquad C = \begin{bmatrix}
    1 & 0
   \end{bmatrix}.$$
 The mapping $\psi$ from the parameters $(x_0, r, \alpha, B)$ to the output of the corresponding LTI system on input $u_1, \ldots, u_T$ is differentiable. In terms of the overall LTI system, adding the output of two weighted sources corresponds to concatenating the corresponding parameters.

In~\S\ref{SecSMI}, we show that our algorithm matches the state of the art on two standard system identification datasets.

\paragraph{Matrix completion.}
\newcommand{\Mat}{A}
The task of matrix completion is to estimate all entries of a large matrix given observations of a few entries.
Clearly this task is impossible without prior information or assumptions about the matrix.
If we believe that a low-rank matrix will approximate the truth well, a common heuristic is to minimize the squared error subject to a nuclear norm bound.
For background in the theory and practice of matrix completion under this assumption see \cite{mc_theory_CACM}.
We solve the following optimization problem:
\[\min_{\|\Mat\|_* \le \tau} \|M(\Mat) - y \|^2. \]
Here $M$ is the masking operator, that is, the linear operator that maps a matrix $\Mat\in \bbR^{n\times m}$ to the vector containing its observed entries, and $y$ is the vector of observed entries. We can rephrase this in our notation by letting $\Theta = \{(u,v) \in \reals^n \times \reals^m : \|u\|_2 = \|v\|_2 = 1 \}$, $\psf((u,v)) = M(uv^T)$, and $\ell(\cdot) = \|\cdot \|^2.$
In~\S\ref{SecSMI}, we show that our algorithm achieves state of the art results on the Netflix Challenge, a standard benchmark in matrix completion.

\paragraph{Bayesian experimental design.}

In experimental design we seek to estimate a vector $x \in \reals^d$ from measurements of the form
\[ y_i = f(\theta_i)^Tx + \epsilon_i. \]
Here $f : \Theta \rightarrow \reals^d$ is a known differentiable feature function and $\epsilon_i$ are independent noise terms.
We want to choose $\theta_i, \ldots, \theta_k$ to minimize our uncertainty about $x$ --- if each measurement requires a costly experiment, this corresponds to getting the most information from a fixed number of experiments. For background, see~\cite{pukelsheim}.

In general, this task in intractable.
However, if we assume $\epsilon_i$ are independently distributed as standard normals and $x$ comes from a standard normal prior we can analytically derive the posterior distribution
of $x$ given $y_1, \ldots, y_k$,
as the full joint distribution of $x ,  y_1, \ldots, y_m$ is normal.

One notion of how much information $y_1, \ldots, y_m$ carry about $x$ is the entropy of the posterior distribution of $x$ given the measurements.
We can then choose $\theta_1, \ldots, \theta_k$ to minimize the entropy of the posterior, which is equivalent to minimizing the (log) volume of an uncertainty ellipsoid.
With this setup, the posterior entropy is (up to additive constants and a positive multiplicative factor) simply
\[ -\log\det \left( I + \sum_i f(\theta_i) f(\theta_i)^T \right)^{-1}.  \]

To put this in our framework, we can take $\psf(\theta) = f(\theta)f(\theta)^T$, $y = 0$ and $\ell(M) = -\log\det ( I + M)^{-1}$.
We relax the requirement to choose exactly $k$ measurement parameters and instead search for a sparse measure with bounded total mass, giving us an instance of \eqref{cvxproblem}.

\paragraph{Fitting mixture models to data.}
Given a parametric distribution $P(x | \theta)$ we consider the task of recovering the components of a mixture model from i.i.d.~samples. For background see~\cite{PGM}.
To be more precise, we are given data $\{x_1, \ldots, x_d \}$ sampled i.i.d.~from a distribution of the form
$P(x) = \int_{\theta \in \Theta}  P(x | \theta) \pi(\theta)$.
The task is to recover the mixing distribution $\pi$.
If we assume $\pi$ is sparse, we can phrase this as a sparse inverse problem.
To do so, we choose $\psf(\theta) = (P(x_i | \theta))_{i=1}^d$.
A common choice for $\ell$ is the (negative) log-likelihood of the data: i.e., $y = 0$, $\ell(p) = - \sum_i \log p_i.$
The obvious constraint is $\int d\pi(\theta) \le 1.$

\paragraph{Design of numerical quadrature rules.}
In many numerical computing applications we require fast procedures to approximate integration against a fixed measure. One way to do this is use a quadrature rule: \[ \int f(\theta) dp(\theta) \simeq \sum_{i=1}^k w_i f(x_i).\]
The quadrature rule, given by $w_i \in \reals$ and $x_i \in \Theta$, is chosen so that the above approximation holds for functions $f$ in a certain function class. The pairs $(x_i,w_i)$ are known as quadrature nodes. In practice, we want quadrature rules with very few nodes to speed evaluation of the rule.

Often we don't have an a priori description of the function class from which $f$ is chosen, but we might have a finite number of examples of functions in the class, $f_1, \ldots, f_d$, along with their integrals against $p$, $y_1, \ldots, y_d$. In other words, we know that \[\int f_i(\theta) dp(\theta) = y_i.\] A reasonable quadrature rule should  approximate the integrals of the known $f_i$ well.

We can phrase this task as a sparse inverse problem where each source is a single quadrature node. In our notation, $\psi(\theta) = (f_1(\theta), \ldots, f_d(\theta))$.  Assuming each function $f_i$ is differentiable, $\psi$ is differentiable. A common choose of $\ell$ for this application is simply the squared loss.
Note that in this application there is no need to constraint the weights to be positive.

\paragraph{Neural spike identification.}
In this example we consider the voltage $v$ recorded by an extracellular electrode implanted in the vicinity of a population of neurons.
Suppose that this population of neurons contains $K$ types of neurons,
 and that when a neuron of type $k$ fires at time $t \in \bbR$, an action potential of the form
 $\psi(t,k)$ is recorded.
 Here $\psi : \bbR \times \{1,\ldots,K\} \to \bbR^d$ is a vector of voltage samples.
 If we denote the parameters of the $i$th neuron by $\theta_i = (t_i, k_i)$, then the total voltage $v\in \bbR^d$ can be modeled as a superposition of these action potentials:
 $$v = \sum_{i=1}^\nSources w_i\psf(\theta_i).$$
 Here the weights $w_i > 0$ can encode the distance between the $i$th neuron and the electrode.
 The sparse inverse problem in this application is to recover the parameters $\theta_1,\ldots,\theta_\nSources$ and weights $w_1,\ldots,w_\nSources$ from the voltage signal $v$. For background see \cite{EkandhamTranchinaSimoncelli}.

\paragraph{Designing radiation therapy.}
External radiation therapy is a common treatment for cancer in which several beams of radiation are fired at the patient to irradiate tumors.
The collection of beam parameters (their intensities, positions, and angles) is called the treatment plan, and is chosen to minimize an objective function specified by an oncologist.
The objective usually rewards giving large doses of radiation to tumors, and low dosages to surrounding healthy tissue and vital organs.
Plans with few beams are desired as repositioning the emitter takes time---increasing the cost of the procedure and the likelihood that the patient moves enough to invalidate the plan.

A beam fired with intensity $w>0$ and parameters $\theta$ delivers a radiation dosage $w\psf(\theta)\in \bbR^d$. Here the output is interpreted as the radiation delivered to each of $d$ voxels in the body of a patient.
The radiation dosage from beams with parameters $\theta_1,\ldots,\theta_\nSources$ and intensities $w_1,\ldots,w_\nSources$ add linearly, and the objective function is convex.
For background see~\cite{radiation}.

\section{Conditional gradient method}
\label{SecAlgorithms}
In this section we present our main algorithmic development.  We begin with a review of the classical conditional gradient method (CGM) for finite-dimensional convex programs.  We then translate the classical CGM for the sparse inverse problem~\eqref{cvxproblem}.  In particular, we augment this algorithm with an aggressive local search subroutine that significantly improves the practical performance of the CGM.

The classical CGM solves the following optimization problem:
\begin{equation}
\label{EqnFW}
\minimize_{x\in \cC} f(x),
\end{equation}
where $\cC$ is a closed, bounded, and convex set and $f$ is a differentiable convex function.

CGM proceeds by iteratively solving linearized versions of~\eqref{EqnFW}.  At iteration $k$, we form a linear approximation to the function $f$ at the current point $x_k$. We then minimize the linearization over the feasible set to get a potential solution $s_k$. This step can be interpreted as a restricted steepest descent: the linear functional represented by the gradient is simply the directional derivative.
As $s_k$ minimizes a simple approximation of $f$ that degrades with distance from $x_k$ we take a convex combination of $s_k$ and $x_k$ as the next iterate.  We summarize this method in Algorithm~\ref{AlgFW}.

\newcommand{\tol}{\epsilon}
\newcommand{\kmax}{k_{\tiny \max}}
\begin{minipage}{.8\textwidth}
\begin{algorithm}[H]
\caption{Conditional gradient method (CGM)}
\label{AlgFW}
\smallskip
{\bf For} $k = 1,\ldots \kmax$
\begin{enumerate}
\item Linearize: $\hat{f_k}(s)  \leftarrow f(x_{k}) + \langle \nabla f(x_{k}), s - x_{k} \rangle$. \label{linearize}
\item Minimize: $s_k \ni \argmin_{s\in \cC} \hat{f_k}(s)$.  \label{lmo}
\item Tentative update: $\tilde{x}_{k+1} \leftarrow \frac{k}{k+2}x_{k} + \frac{2}{k+2}s_k$.
\item Final update: Choose $x_{k+1}$ such that $f(x_{k+1}) \le f(\tilde{x}_{k+1})$. \label{update}
\end{enumerate}
\end{algorithm}
\end{minipage}

\bigskip
\noindent It is important to note that minimizing  $\hat{f_k}(s)$ over the feasible set $\cC$ in step~\ref{lmo} may be quite difficult and requires an application-specific subroutine.

One of the more remarkable features of the CGM is step~\ref{update}.  While the algorithm converges using the tentative update in step~\ref{update}, all of the convergence guarantees of the algorithm are preserved if one replaces $\tilde{x}_{k+1}$ with \emph{any} feasible $x_{k+1}$ that achieves a smaller value of the objective.  There are thus many possible choices for the final update in step \ref{update}, and the empirical behavior of the algorithm can be quite different for different choices.  One common modification is to do a line-search:
$$x_{k+1} = \argmin_{x \in \textrm{conv}(x_{k}, s_k)} f(x).$$
We use $\textrm{conv}$ to denote the convex hull---in this last example, a line segment.
Another variant, the {\em fully-corrective} conditional gradient method, chooses
$$x_{k+1} = \argmin_{x \in \textrm{conv}(x_{k}, s_1, \ldots, s_k)} f(x).$$
In the next section, we propose a natural choice for this step in the case of measures that uses local search to speed-up the convergence of the CGM.

One appealing aspect of the CGM is that it is very simple to compute a lower bound on the optimal value $f_\star$ as the algorithm runs. By convexity of $f$, we have
\begin{equation*}
f(s) \ge f(x_k) + \langle s - x_k, \nabla f(x_k) \rangle = \hat{f_k}(s)
\end{equation*}
for any $s \in \cC$. Minimizing both sides over $s$ gives us the elementary bound
\begin{equation*}
f_\star \ge \hat{f_k}(s_k).
\end{equation*} The right hand side of this inequality is readily computed after step \eqref{lmo}.

\subsection{CGM for sparse inverse problems}

In this section we translate the classical CGM for the sparse inverse problem~\eqref{cvxproblem}.
We give two versions---first a direct translation of the fully corrective variant and then our improved algorithm for differentiable measurement models.  To make it clear that we operate over the space of measures we change notation and denote the iterate by $\mu_k$ instead of $x_k$.  The most obvious challenge is that we cannot represent a general measure on a computer unless it is finitely-supported.  We will see however that the steps of CGM can in fact be carried out on a computer in this context.  Moreover we later prove that the iterates can be represented with bounded memory.

Before we describe the algorithm in detail, we first explain how to linearize the objective and minimize the linearization.
In the space of measures, linearization is most easily understood as a directional derivative: in the finite dimensional case, we always have that
$$\langle \nabla f(\mu_k), s \rangle = \dirderiv{f}{\mu_k}{s} := \lim_{t \downarrow 0} \frac{f(\mu_k +ts) - f(\mu)}{t}.$$
\newcommand{\rr}{r_k}

In our formulation~\eqref{cvxproblem}, $f(\mu) = \ell(\Phi\mu_k -y)$.  If we define the \emph{residual error} as $\rr = \Phi\mu_k -y$, we can compute the directional derivative of our particular choice of $f$ at $\mu_k$ as
\begin{equation}
\label{EqnDirDeriv}
\dirderiv{f}{\mu_k}{s}
= \lim_{t \downarrow 0}
\frac{\ell(\Phi( \mu_k + t s) - y) - \ell(\Phi( \mu_k) - y) }{t} = \lim_{t \downarrow 0}
\frac{\ell(\rr + t \Phi s) - \ell(\rr) }{t} = \dirderiv{\ell}{r_k}{\Phi s} =  \langle \nabla \ell(\rr) , \Phi s \rangle \,.
\end{equation}
Here, the inner product on the right hand side of the equation is the standard inner product in $\bbR^d$.

The second step of the CGM minimizes the linearized objective over the constraint set. In other words, we minimize $\langle \nabla \ell(\rr) , \Phi s \rangle$ over a candidate nonnegative measure $s$ with total mass bounded by $\tau$.
Interchanging the integral (in $\Phi$) with the inner product, and
defining $F(\theta) := \langle \nabla \ell(\rr), \psf (\theta) \rangle$, we need to solve the optimization problem:
\begin{equation}
\label{minimizeLinear}
\minimize_{s \ge 0,~s(\Theta) \le \tau} \int  F(\theta) ds(\theta).
\end{equation}
The optimal solution of~\eqref{minimizeLinear} is the point-mass $\tau \delta_{\theta_\star}$, where $\theta_\star \in \argmin F(\theta)$  (unless $F(\theta)$ is positive everywhere in which case the optimal solution is the $0$ measure).
This means that at each step of the CGM we need only add a single point to the support of our approximate solution $\mu_k$.
Moreover we prove that our algorithm produces iterates $\mu_k$ with support on at most $d+1$ points (see Theorem~\ref{ThmBoundedMemory}).

We now describe the fully-corrective variant of the CGM for sparse inverse problems (Algorithm~\ref{FW_Meas}).
The state of the algorithm at iteration $k$ is a nonnegative atomic measure
$\mu_k$ supported on a finite set $S_k$ with mass $\mu_k(\{\theta\})$ on points $\theta \in S_k$.  The algorithm alternates between selecting a source to add to the support, and tuning the weights to lower the current cost.
This tuning step (Step \ref{computeW}) is a finite-dimensional convex optimization problem that we can solve with an off-the-shelf algorithm.

\newcommand{\FWM}{\FCFW}  
\newcommand{\outY}{{\hat y}}
\newcommand\itemrow[2]{%
  \item\makebox[14em][l]{#1}%
    \makebox[10em][l]{#2}%
}
\begin{minipage}{.8\textwidth}
\begin{algorithm}[H]
\caption{Conditional gradient method for measures (\FWM)}
\label{FW_Meas}
{\bf For} $k = 1 : \kmax$
\begin{enumerate}
\itemrow{Compute gradient of loss:}{$g_k = \nabla \ell(\Phi \mu_{k-1} - y).$}
\itemrow{Compute next source:}{$\theta_k \in \argmin\limits_{\theta \in \Theta}   \langle g_k, \psf(\theta) \rangle.$} \label{gba}
\itemrow{Update support:}{$S_{k} \leftarrow S_{k-1} \cup \{ \theta_k \}.$}
\itemrow{Compute weights:}{$\mu_{k} \leftarrow \argmin\limits_{\substack{\mu \ge 0, \; \mu(S_k) \le \tau \\ \mu(S_k^c) = 0}}   \ell \left(\sum_{\theta \in S_k} \mu(\{\theta\}) \psf(\theta) - y\right).$ \label{computeW}}
\itemrow{Prune support:}{$S_{k} \gets \supp (\mu_{k}).$}
\end{enumerate}
\end{algorithm}
\end{minipage}

\bigskip

While we can simply run for a fixed number of iterations, we may stop early using the standard CGM bound.
With a tolerance parameter $\epsilon>0$,
we terminate when the conditional gradient bound assures us that we are at most $\epsilon$-suboptimal. In particular, we terminate when
\begin{equation}
\label{bound}
\langle \Phi \mu_k, g_k \rangle  - \tau \langle \psf(\theta_k), g_k \rangle_+ < \epsilon.
\end{equation}

Unfortunately, \FWM~does not perform well in practice. Not only does it converge very slowly, but the solution it finds is often supported on an undesirably large set.
As illustrated in Figure~\ref{FigFullyCorrective}, the performance of \FWM~is limited by the fact that it can only change the support of the measure by adding and removing points; it cannot smoothly move $S_k$ within $\Theta$.   Figure~\ref{FigFullyCorrective} shows \FWM~applied to an image of two closely separated sources. The first source $\theta_1$ is placed in a central position overlapping both true sources.  In subsequent iterations sources are placed too far to the right and left, away from the true sources.
To move the support of the candidate measure requires \FWM{} to repeatedly add and remove sources; it is clear that the ability to move the support smoothly within the parameter space would resolve this issue immediately.

\begin{figure}
    \centering
    \begin{subfigure}[b]{0.3\textwidth}
        \centering
        \includegraphics[width=\textwidth]{./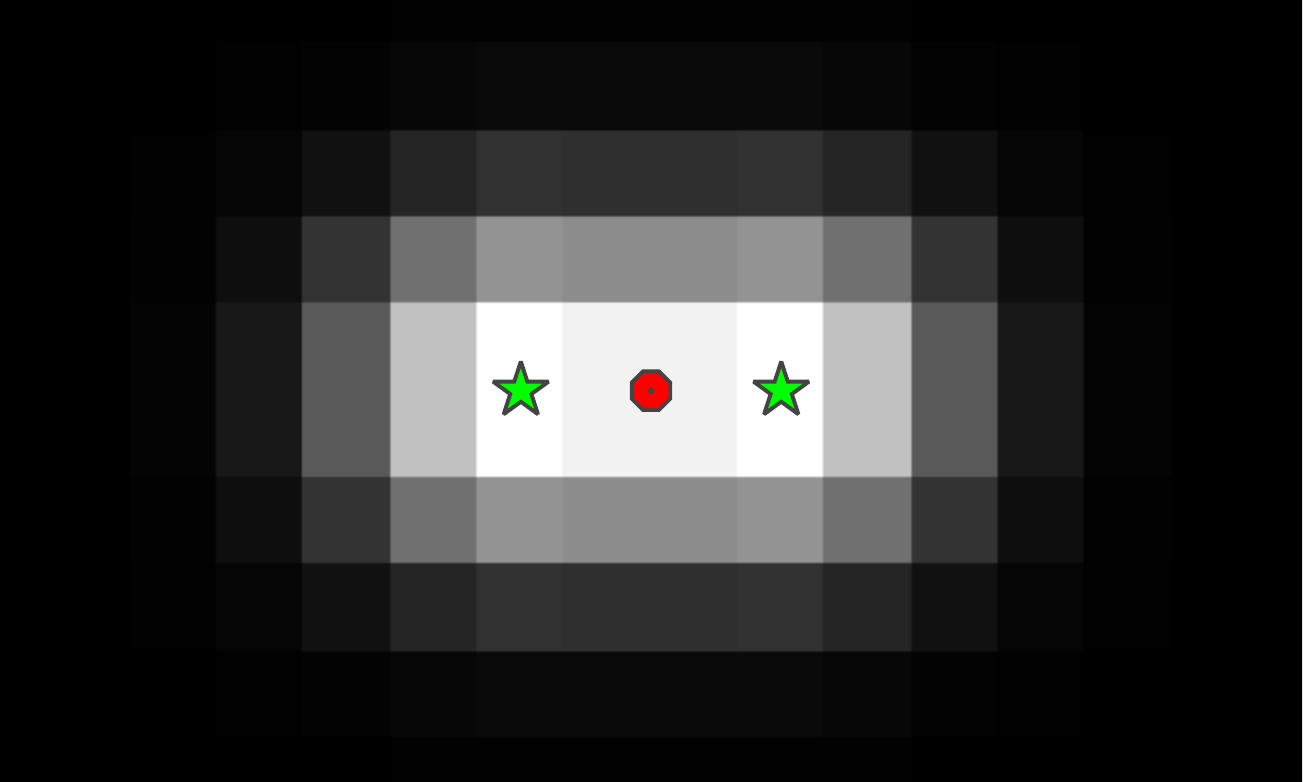}
        \label{figb}
    \end{subfigure}
    \begin{subfigure}[b]{0.3\textwidth}
        \centering
        \includegraphics[width=\textwidth]{./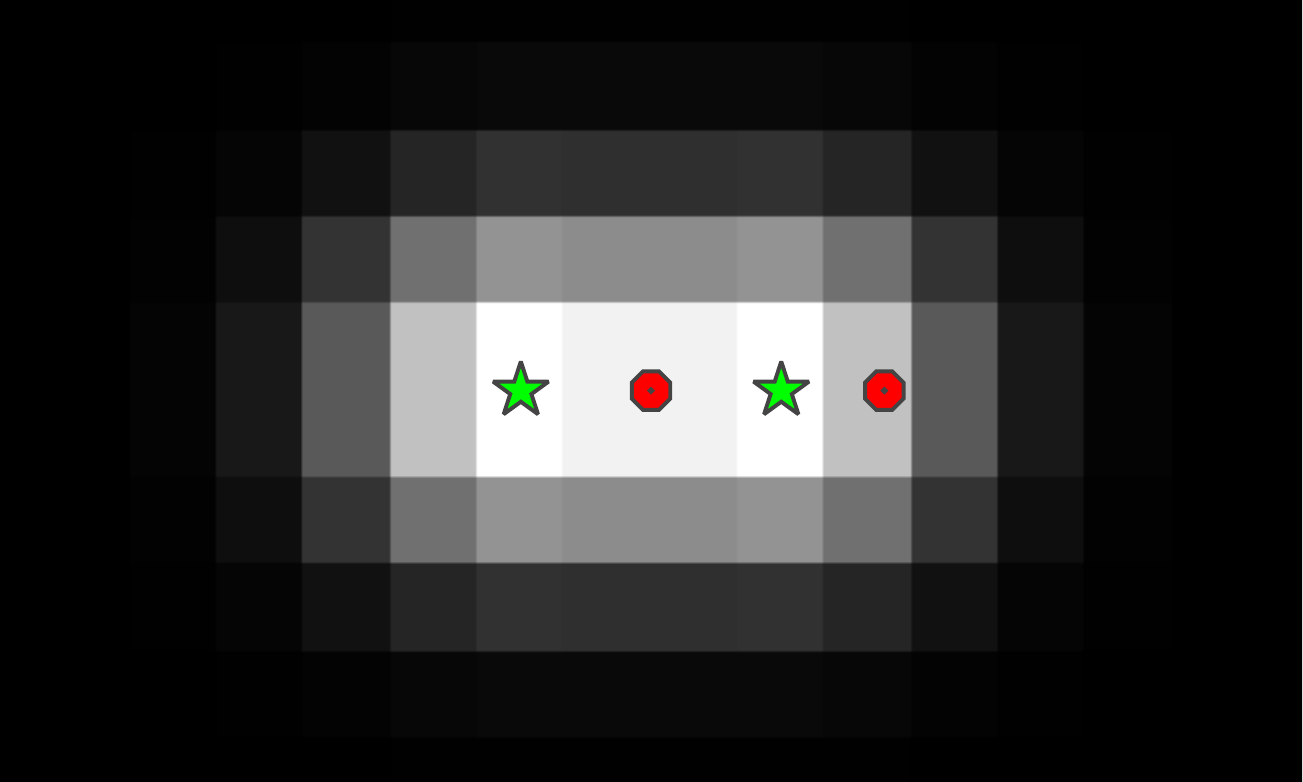}
        \label{figc}
    \end{subfigure}
    \begin{subfigure}[b]{0.3\textwidth}
        \centering
        \includegraphics[width=\textwidth]{./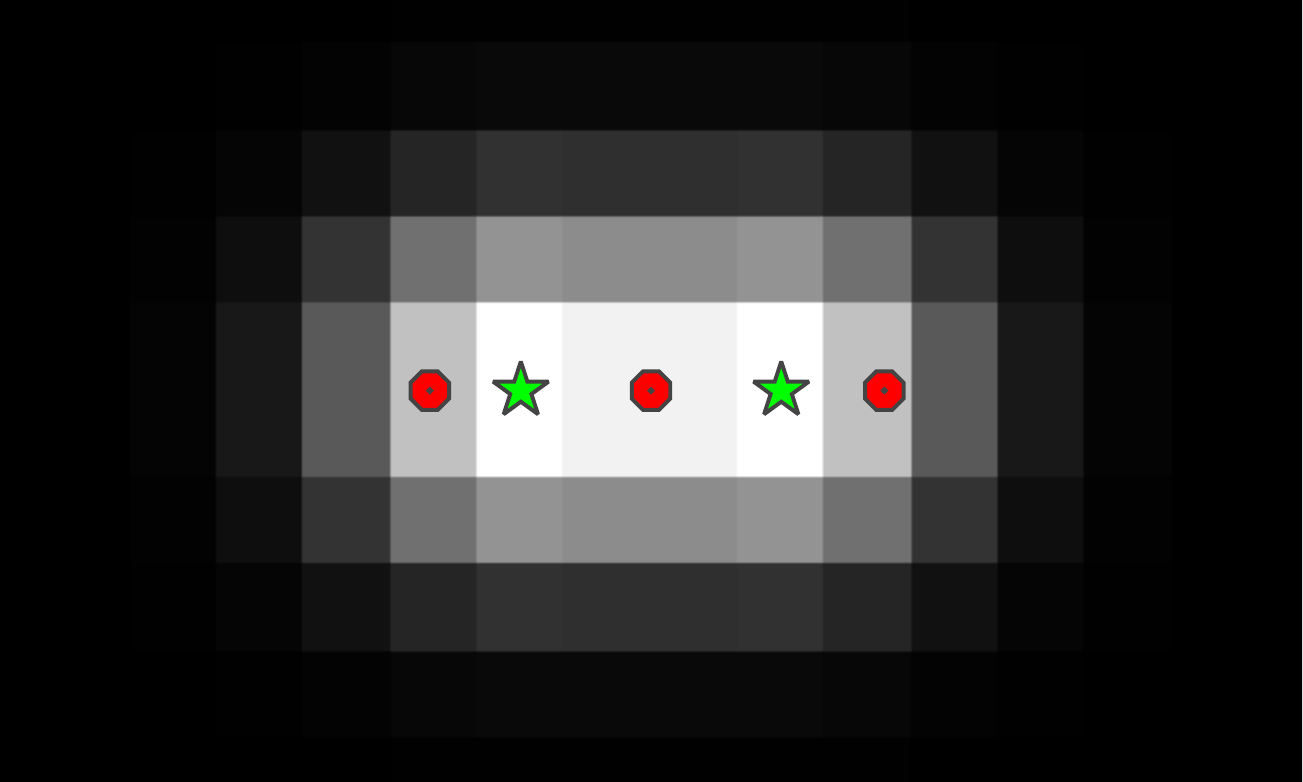}
        \label{figd}
    \end{subfigure}
    \caption{The three plots above show the first three iterates of the fully corrective CGM in a simulated superresolution imaging problem with two point sources of light.  The locations of the true point sources are indicated by green stars, and the greyscale background shows the pixelated image.  The elements of $S_k$ for $k = 1,2,3$ are displayed by red dots.}
    \label{FigFullyCorrective}
\end{figure}

In practice, we can speed up convergence and find significantly sparser solutions by allowing the support to move continuously within $\Theta$.   The following algorithm, which we call the alternating descent conditional gradient method (\OurAlg), exploits the differentiability of $\psf$ to locally improve the support at each iteration.

\newcommand{\improve}{\textbf{local\_descent}}
\begin{minipage}{.8\textwidth}
\begin{algorithm}[H]
\caption{Alternating descent conditional gradient method (\OurAlg)}
\label{FW_ours}
\smallskip
{\bf For} $k = 1:\kmax$
\begin{enumerate}
\itemrow{Compute gradient of loss: }{$g_k \gets \nabla \ell(\Phi \mu_{k-1} - y).$}
\itemrow{\label{gba}Compute next source: }{Choose $\theta_k \in \argmin\limits_{\theta \in \Theta}  \langle \psf(\theta), g_k \rangle.$}
\itemrow{Update support: }{$S_{k} \gets S_{k-1} \cup \{ \theta_k \}$.}
\item \label{update}  Coordinate descent on nonconvex objective:\\
{\bf Repeat:}
    \begin{enumerate}
    \itemrow{Compute weights: }{$\mu_{k} \leftarrow \argmin\limits_{\substack{\mu \ge 0, \; \mu(S_k) \le \tau \\ \mu(S_k^c) = 0}}   \ell \left(\sum_{\theta \in S_k} \mu(\{\theta\}) \psf(\theta) - y\right).$}
    \itemrow{Prune support: }{$S_{k} = \text{support}(\mu_k)$.}
\itemrow{Locally improve support: }{$S_{k} = \improve((\theta, \mu_k(\{\theta \})) : \theta \in S_k )$.}
    \end{enumerate}
\end{enumerate}
\end{algorithm}
\end{minipage}

\bigskip

\noindent Here $\improve$ is a subroutine that takes a measure $\mu_k$ with atomic representation $(\theta_1,w_1),\ldots,(\theta_m,w_m)$ and attempts to use gradient information to reduce the function 
$$(\theta_1,\ldots,\theta_m) \mapsto \ell (\sum_{i=1}^m w_i \psf(\theta_i) - y),$$
holding the weights fixed.

When the number of sources is held fixed, the optimization problem
\begin{equation}
\label{nonconvex}
\begin{aligned}
&\text{minimize }&& \lossfn \left(\sum_{i=1}^k w_i \psi(\theta_i) - \target\right) \\
&\text{subject to }&& w_i \ge 0 \\
&&&\theta_i \in \Theta \\
&&&\sum_i w_i \le \tau \\
\end{aligned}
\end{equation}
is nonconvex.
Step~\ref{update} is then block coordinate descent over $w_i$ and $\theta_i$.
The algorithm as a whole can be interpreted as alternating between performing descent on the convex (but infinite-dimensional) problem~\eqref{cvxproblem} in step~\ref{gba} and descent over the finite-dimensional (but nonconvex) problem~\eqref{nonconvex} in step~\ref{update}.  The bound~\eqref{bound} remains valid and can be used as a termination condition.

As we have previously discussed, this nonconvex local search does not change the convergence guarantees of the CGM whatsoever.
We will show in Section~\ref{SecTheory} that this is an immediate consequence of the existing theory on the CGM.
However, as we will show in ~\S\ref{SecNumerics}, the inclusion of this local search dramatically improves the performance of the CGM.

\subsection{Interface and implementation}
Roughly speaking, running \OurAlg~on a concrete instance of~\eqref{cvxproblem} requires subroutines for two operations.
We need algorithms to approximately compute:
\begin{enumerate}[(a)]
\item $\psi(\theta)$ and $\frac{d}{d\theta} \psi(\theta)$ \quad for $\theta \in \Theta$. \label{psf}
\item  $\argmin\limits_{\theta \in \Theta}  \langle \psf(\theta), v \rangle$ for arbitrary vectors $v \in \reals^d$.   \label{LMO}
\end{enumerate}
Computing \eqref{psf} is usually straightforward in applications with differentiable measurement models.
Computing \eqref{LMO} is not easy in general. However, there are many applications of interest where \eqref{LMO} is tractable.  For example, if the parameter space  $\Theta$ is low-dimensional, then  the ability to compute~\eqref{psf} is sufficient to approximately compute~\eqref{LMO}: we can simply grid the parameter space and begin local search using the gradient of the function $\theta \mapsto \langle \psi(\theta), v \rangle$.
Note that because of the local improvement step, \OurAlg~works well even without exact minimization of~\eqref{LMO}.  We prove this fact about inexact minimization in Section~\ref{SecTheory}.

If the parameter space is higher dimensional, however, the feasibility of computing~\eqref{LMO} will depend on the specific application.  One example of particular interest that has been studied in the context of the CGM is matrix completion~\cite{jaggimc,cogent,harchaoui2014conditional,AcceleratedMatrix}.  In this case, the~\eqref{LMO} step reduces to computing the leading singular vectors of a sparse matrix.  We will show that adding local improvement to the CGM accelerates its convergence on matrix completion in the experiments.

We also note that in the special case of linear system identification, $\Theta$ is 6 dimensional, which is just large enough such that gridding is not feasible.  In this case, we show that we can reduce the 6-dimensional optimization problem to a 2-dimensional problem and then again resort to gridding.  We expect that in many cases of interest, such specialized solvers can be applied to solve the selection problem~\eqref{LMO}.


\section{Related work}

There has recently been a renewed interest in the conditional gradient method as a general purpose solver for constrained inverse problems~\cite{jaggi,harchaoui2014conditional}.  These methods are simpler to implement than the projected or proximal gradient methods which require solving a quadratic rather than linear optimization over the constraint set.

The idea of augmenting the classic conditional gradient method with improvement steps is not unique to our work.
Indeed, it is well known that any modification of the iterate that decreases the objective function will not hurt theoretical convergence rates~\cite{jaggi}.
Moreover, Rao \emph{et al}~\cite{cogent} have proposed a version of the conditional gradient method, called CoGENT, for atomic norm problems that take advantage of many common structures that arise in inverse problems.
The reduction described in our theoretical analysis makes it clear that our algorithm can be seen as an instance of CoGENT specialized to the case of measures and differentiable measurement models.

The most similar proposals to~\OurAlg~come from the special case of matrix completion or nuclear-norm regularized problems. Several papers \cite{AcceleratedMatrix, Hazan, harchaoui2014conditional, jaggimc} have proposed algorithms based on combinations of rank-one updates and local nonconvex optimization inspired by the well-known heuristic of~\cite{BM}.
While our proposal is significantly more general, \OurAlg~essentially recovers these algorithms in the special case of nuclear-norm problems.

We note that in the context of inverse problems, there are a variety of algorithms proposed to solve the general infinite-dimensional problem~\eqref{cvxproblem}.
Tang \emph{et al}~\cite{JustDiscretize} prove that this problem can be approximately solved by gridding the parameter space and solving the resulting finite dimensional problem.  
However, these gridding approaches are not tractable for problems with parameter spaces even of relatively modest dimension.
Moreover, even when gridding is tractable, the solutions obtained are often supported on very large sets and heuristic post-processing is required to achieve reasonable performance in practice~\cite{JustDiscretize}.
In spite of these limitations, gridding is the state of the art in many application areas including computational neuroscience~\cite{EkandhamTranchinaSimoncelli},
superresolution fluorescence microscopy~\cite{FastStorm},
radar~\cite{BaraniukSteeghs,HermanStrohmer},
remote sensing~\cite{FannjiangStrohmer},
compressive sensing~\cite{BajwaHaupt, MalioutovCetin,DuarteBaraniuk},
and polynomial interpolation~\cite{Rauhut}.

There have also been a handful of papers that attempt to tackle the infinite-dimensional problem without gridding.
For the special case where $\ell (\cdot) = \| \cdot \|_2^2$, Bredies and Pikkarainen~\cite{Austrian} propose an algorithm to solve the Tikhonov-regularized version of problem~\eqref{cvxproblem} that is very similar to Algorithm~\ref{FW_ours}.
They propose performing a conditional gradient step to update the support of the measure, followed by soft-thresholding to update the weights.
Finally, with the weights of the measure fixed they perform discretized gradient flow over the locations of the point-masses.
However, they do not solve the finite-dimensional convex problem at every iteration, which means there is no guarantee that their algorithm has bounded memory requirements.
For the same reason, they are limited to one pass of gradient descent in the nonconvex phase of the algorithm.
In~\S\ref{SecNumerics} we show that this limitation has serious performance implications in practice.


\section{Theoretical guarantees}
\label{SecTheory}
In this section we present a few theoretical results.
The first guarantees that we can run our algorithm with bounded memory.
The second result guarantees that the algorithm converges to an optimal point and bounds the worst-case rate of convergence.

\subsection{Bounded memory}
As the CGM for measures adds one point to the support of the iterate per iteration, we know that the cardinality of the support of $\mu_k$ is bounded by $k$.
For large $k$, then, $\mu_k$ could have large support.
The following theorem guarantees that we can run our algorithm with bounded memory and in fact we need only store at most $d+1$ points, where $d$ is the dimension of the measurements.
\begin{theorem}
\label{ThmBoundedMemory}
\OurAlg~may be implemented to generate iterates with cardinality of support uniformly bounded by $d+1$.
\end{theorem}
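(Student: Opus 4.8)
The plan is to argue that whenever the iterate $\mu_k$ has support on more than $d+1$ points, we can replace it with another feasible measure with the same forward image $\Phi\mu_k$, the same or smaller total mass, and support on at most $d+1$ points. Since the objective $\ell(\Phi\mu - y)$ and the feasibility constraints $\mu \ge 0$, $\mu(\Theta) \le \tau$ depend on $\mu$ only through $\Phi\mu$ and $\mu(\Theta)$, such a replacement is harmless: it preserves the invariant $f(x_{k+1}) \le f(\tilde x_{k+1})$ required by the final-update step of the CGM, so all convergence guarantees are untouched. The key point is that after the weight-recomputation step (Step~\ref{computeW} in Algorithm~\ref{FW_ours}) we may insert a ``sparsification'' pass that performs exactly this replacement, and then continue.

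First I would set up the linear algebra. Suppose $\mu = \sum_{i=1}^{m} w_i \delta_{\theta_i}$ with all $w_i > 0$ and $m \ge d+2$. Consider the $m$ vectors $(\psi(\theta_i), 1) \in \reals^{d+1}$; since $m > d+1$ they are linearly dependent, so there is a nonzero $\gamma \in \reals^m$ with $\sum_i \gamma_i \psi(\theta_i) = 0$ and $\sum_i \gamma_i = 0$. Then for any $t$, the measure $\mu^{(t)} = \sum_i (w_i - t\gamma_i)\delta_{\theta_i}$ satisfies $\Phi\mu^{(t)} = \Phi\mu$ and $\mu^{(t)}(\Theta) = \mu(\Theta)$. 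Choose the sign of $t$ so that $t\gamma_i \ge 0$ for at least one $i$ with $\gamma_i \ne 0$, and increase $|t|$ until the first coordinate $w_j - t\gamma_j$ hits zero; this is possible because $\sum_i \gamma_i = 0$ forces $\gamma$ to have entries of both signs, so some $w_i - t\gamma_i$ decreases. At that value of $t$, $\mu^{(t)}$ is still nonnegative, still has total mass $\mu(\Theta) \le \tau$, still has the same forward image, but is now supported on at most $m-1$ points. Iterating this elimination drives the support size down to $d+1$.

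Next I would note the cost of the other steps. The conditional-gradient step (Step~\ref{gba}) adds at most one atom per outer iteration, and the $\improve$ subroutine only moves the locations $\theta_i$, never increasing the cardinality of the support. So if we apply the sparsification pass above once per iteration — say, right after the prune step — the support is bounded by $d+1$ before the atom is added and by $d+2$ immediately after, hence we can re-sparsify down to $d+1$ and the uniform bound holds for every $\mu_k$. I would also remark that this is precisely the classical Carathéodory-type argument underlying the fact that problem~\eqref{cvxproblem} always has a solution supported on $\le d+1$ points, instantiated algorithmically.

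The main obstacle — really a bookkeeping subtlety rather than a deep difficulty — is making sure the sparsification step interacts cleanly with the stated algorithm: one must check that inserting it does not disturb the termination bound~\eqref{bound} (it does not, since that bound depends only on $\Phi\mu_k$ and $g_k$, both unchanged) and that it is consistent with the ``Choose $x_{k+1}$ such that $f(x_{k+1}) \le f(\tilde x_{k+1})$'' freedom in the CGM, which it is because the replacement leaves $f$ exactly equal. A secondary point worth a sentence is the degenerate case where $F(\theta) \ge 0$ everywhere and the linear minimization returns the zero measure; then no atom is added and the bound is trivially maintained.
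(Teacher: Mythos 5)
Your proof is correct and rests on the same idea as the paper's: a Carath\'eodory argument in $\reals^{d+1}$ showing that any feasible weight vector can be replaced by one with at most $d+1$ nonzeros without changing $\Phi\mu$ or the total mass. The only difference is presentational: the paper invokes Carath\'eodory existentially to assert that the fully-corrective weight subproblem \emph{has} an optimal solution with at most $d+1$ nonzeros (and notes separately that actually finding it requires a simplex-type or ray-shooting method), whereas you run the constructive proof of Carath\'eodory as an explicit sparsification pass on whatever solution the solver returns, which makes the implementability claim slightly more self-contained.
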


\begin{proof}
Lemma~\eqref{lemma1} allows us to conclude that the fully-corrective step ensures that the support of the measure remains bounded by $d+1$ for all iterations.
\end{proof}

\newcommand{\wopt}{w_\star}
\newcommand{\uopt}{u_\star}
\begin{lemma}
\label{lemma1}
The finite-dimensional problem
\begin{equation}
\label{fdprob}
\minimize_{w \ge 0,~\sum_{i=1}^{m} w_i \le \tau} \ell(\sum_i w_i \psf(\theta_i) - y)
\end{equation}
has an optimal solution $\wopt$ with at most $d+1$ nonzeros.
\end{lemma}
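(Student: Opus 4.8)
The key observation is that the objective in \eqref{fdprob} depends on the weight vector $w = (w_1,\dots,w_m)$ only through the measurement vector $x = \sum_{i=1}^m w_i \psf(\theta_i) \in \reals^d$. The plan is therefore to pass to the image of the feasible set under the map $w \mapsto x$, apply Carath\'eodory's theorem there, and pull the resulting sparse representation back to a weight vector.

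First I would identify this image explicitly. Setting $\psf(\theta_0) := 0$ and $w_0 := \tau - \sum_{i=1}^m w_i$, one checks that as $w$ ranges over $\{w \ge 0 : \sum_{i=1}^m w_i \le \tau\}$, the vector $\sum_{i=1}^m w_i \psf(\theta_i)$ ranges over exactly $P := \textrm{conv}\{0,\, \tau\psf(\theta_1),\dots,\tau\psf(\theta_m)\}$, the convex hull of the $m+1$ points $0$ and $\tau\psf(\theta_i)$ (the $0$ vertex is precisely what accommodates the inequality $\sum_i w_i \le \tau$ rather than an equality). Hence $P$ is a compact convex polytope in $\reals^d$, and minimizing \eqref{fdprob} over $w$ is equivalent to minimizing $\ell(x-y)$ over $x \in P$. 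Since $P$ is compact and $\ell$ is continuous on (a neighborhood of) $P - y$ for the losses used in the applications, an optimal $\wopt$ exists, and the optimal value equals $\ell(x_\star - y)$ where $x_\star := \sum_{i=1}^m \wopt_i \psf(\theta_i) \in P$.

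Next, because $x_\star$ lies in the convex hull of the $m+1$ points $\{0\}\cup\{\tau\psf(\theta_i)\}_{i=1}^m \subseteq \reals^d$, Carath\'eodory's theorem lets me write $x_\star$ as a convex combination of at most $d+1$ of these points; discarding the $0$ vertex if it appears, $x_\star = \sum_{i \in J} \lambda_i\, \tau\psf(\theta_i)$ for some index set $J \subseteq \{1,\dots,m\}$ with $|J| \le d+1$, coefficients $\lambda_i \ge 0$, and $\sum_{i\in J}\lambda_i \le 1$. Defining $\tilde w_i := \tau\lambda_i$ for $i \in J$ and $\tilde w_i := 0$ otherwise yields a feasible point ($\tilde w \ge 0$, and $\sum_i \tilde w_i = \tau\sum_{i\in J}\lambda_i \le \tau$) satisfying $\sum_i \tilde w_i \psf(\theta_i) = x_\star$, hence achieving the optimal objective value, while having at most $d+1$ nonzero entries. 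This proves the lemma.

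I do not anticipate a serious obstacle: the argument is a direct Carath\'eodory reduction once the image set $P$ is correctly identified. The only points needing a little care are (i) verifying that the image of the weight feasible set is precisely $P$, including the $0$ vertex, and (ii) the mild regularity ensuring the minimum is attained --- for the squared-error and log-likelihood losses this is immediate, and for a domain-restricted loss such as $-\log\det(I+\cdot)^{-1}$ one simply notes the problem is vacuous unless $P - y$ meets the effective domain of $\ell$.
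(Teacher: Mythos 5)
Your proposal is correct and follows essentially the same route as the paper: both arguments observe that the optimal measurement vector lies in $\tau\,\textrm{conv}(\{\psf(\theta_i)\}_{i=1}^m \cup \{0\})$ and invoke Carath\'eodory's theorem to re-express it using at most $d+1$ of these points, pulling the coefficients back to a sparse optimal weight vector. Your additional care about attainment of the minimum and about discarding the $0$ vertex is a slight refinement of details the paper leaves implicit, not a different argument.
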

\begin{proof}

Let ${\uopt}$ be any optimal solution to~\eqref{fdprob}.
As ${\uopt}$ is feasible, we have that
\[v = \sum_i {\uopt}_i \psf(\theta_i) \in \tau \textrm{conv}(\{\psf(\theta_i) : i = 1,\ldots,m\} \cup\{0\}).\]
In other words, $\frac{v}{\tau}$ lies in the convex hull of a set in $\reals^d$.
Caratheodory's theorem immediately tells us that $\frac{v}{\tau}$ can be represented as a convex combination of at most $d+1$ points from $\{\psf(\theta_i) : i = 1,\ldots,m\}$.
That is, there exists a $\wopt$ with at most $d+1$ nonzeros such that
\[ \sum_{i=1}^{m} {\wopt}_i \psf(\theta_i) = v.\]
This implies that $\wopt$ is also optimal for~\eqref{fdprob}.
\end{proof}
Note that in order to find $\wopt$, we need to either use a simplex-type algorithm to solve \eqref{fdprob} or explore the optimal set using the random ray-shooting procedure as described in~\cite{subopt}.

\subsection{Convergence analysis}
\newcommand{\diam}{B}
\newcommand{\Lgrad}{L}
\newcommand{\curv}{C_{f,\mathcal{S}}}
\newcommand{\conv}{\textrm{conv}}
\newcommand{\domain}{\conv\mathcal{A}}
\newcommand{\linac}{\zeta}
\newcommand{\diamTheta}{B_\theta}
We now analyze the worst-case convergence rate for~\OurAlg~applied to~\eqref{cvxproblem}.
Theorem~\ref{ThmConv} below guarantees that~\OurAlg~achieves accuracy $\delta$ in $\cO ( \frac 1 \delta ) $ iterations.

The theorem applies even when the linear minimization step is performed approximately.
That is, we allow $\theta_k$ to be chosen such that
\begin{equation}
\label{approx}
 \langle \psf(\theta_k) , g_k \rangle \le \min_{\theta \in \Theta} \langle \psf(\theta), g_k \rangle  + \frac{\linac}{k+2}
 \end{equation}
for some $\linac \ge 0$.
When inequality~\eqref{approx} holds, we say that the linear minimization problem in iteration $k$ is solved to precision $\linac$.

The analysis relies on a finite-dimensional optimization problem equivalent to~\eqref{cvxproblem}. Let $\mathcal{A} = \{ \psi(\theta) : \theta \in \Theta\} \cup \{0\}$.
Readers familiar with the literature on atomic norms~\cite{CGLIP} will recognize the finite-dimensional problem we consider as an atomic norm problem:
\begin{equation}
\label{finiteproblem}
\begin{aligned}
&\minimize_{x \in \reals^d} &&\lossfn (x - y)  \\
&\text{subject to } &&  {x \in \tau \domain}.
\end{aligned}
\end{equation}

The connection to~\eqref{cvxproblem} becomes clear if we note that $\tau \domain = \{ \Phi \mu : \mu \ge 0, \mu(\theta) \le \tau\}$. Any feasible measure $\mu$ for~\eqref{cvxproblem} gives us a feasible point $\Phi \mu$ for~\eqref{finiteproblem}. Likewise, any feasible $x$ for~\eqref{finiteproblem} can be decomposed as a feasible measure $\mu$ for~\eqref{cvxproblem}. Furthermore, these equivalences preserve the objective value.

Before we state the theorem precisely, we introduce some notation.
Let $\ell_\star = \ell(\fwd \mu_\star - y)$ denote the optimal value of~\eqref{cvxproblem}---the discussion above implies that $\ell_\star$ is also the optimal value of~\eqref{finiteproblem}.
Following Jaggi in~\cite{jaggi}, we define the curvature parameter $\curv$ of a function $f$ on a set $\mathcal{S}$.
Intuitively, $\curv$ measures the maximum divergence between $f$ and its first-order approximations, $\hat{f}(z; x) =  f(x) + \langle z - x, \nabla f(x) \rangle$:
$$\curv = \sup_{\substack{x,s \in \mathcal{S} \\ \gamma \in [0,1]\\ z= x+\gamma (s-x)}} \frac 2 {\gamma^2} (f(z) - \hat{f}(z; x)).$$

\begin{theorem}
\label{ThmConv}
Let $C$ be the curvature parameter of the function $f(x)=\ell(x - y)$ on the set $\tau \domain$.
If each linear minimization subproblem is solved to precision $C\linac$,
the iterates $\mu_1, \mu_2, \ldots$ of \OurAlg~applied to~\eqref{cvxproblem} satisfy
\begin{equation*}
\label{EqnThm}
\ell(\fwd \mu_k -y) - \ell_\star  \le \frac{2C}{k+2}(1+\linac).
\end{equation*}
\end{theorem}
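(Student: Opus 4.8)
The plan is to reduce the convergence analysis of \OurAlg{} to the standard convergence theory for the inexact conditional gradient method on the finite-dimensional problem~\eqref{finiteproblem}, exactly as suggested by the discussion preceding the theorem. The key observation is that the forward operator $\Phi$ pushes iterates of \OurAlg{} on the measure problem~\eqref{cvxproblem} to iterates of a valid inexact CGM run on~\eqref{finiteproblem}: if $x_k := \Phi\mu_k$, then $x_k \in \tau\domain$, the objective values agree ($\ell(\fwd\mu_k - y) = f(x_k)$), and $f$ has curvature parameter $C$ on $\tau\domain$ by hypothesis. So it suffices to check that the sequence $(x_k)$ obeys the update rules that the inexact CGM analysis requires, and then invoke that analysis.

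First I would verify the linear minimization step translates correctly. Using~\eqref{EqnDirDeriv}, the directional derivative of $f$ at $x_k$ in a direction $\Phi s$ is $\langle \nabla\ell(r_k), \Phi s\rangle$, and by the computation in~\S3.1 minimizing the linearization over $\tau\domain$ amounts to minimizing $F(\theta) = \langle\nabla\ell(r_k), \psf(\theta)\rangle$ over $\Theta$ (with the option of the zero atom). Thus the atom $\psf(\theta_k)$ selected in step~\ref{gba} of \OurAlg, scaled by $\tau$, is precisely a candidate vertex $s_k$ for CGM on~\eqref{finiteproblem}; and the approximate-minimization condition~\eqref{approx} with tolerance $C\linac$ is exactly the statement that this vertex is chosen to additive precision $C\linac/(k+2)$ in the linearized objective — the standard hypothesis under which inexact CGM (e.g.\ Jaggi~\cite{jaggi}) retains its $O(1/k)$ rate with an additive $\linac$ term. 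Second I would observe that the remainder of iteration $k$ — the "compute weights / prune / locally improve" block in step~\ref{update} — only ever decreases the objective: the weight optimization solves~\eqref{computeW}, which in particular can realize the tentative CGM convex combination $\frac{k}{k+2}x_{k-1} + \frac{2}{k+2}\cdot\tau\delta_{\theta_k}$ as a feasible measure (its support is contained in $S_k$, its mass is at most $\tau$), so the optimal-weight measure does at least as well; pruning is lossless; and \improve{} by definition does not increase $\ell$. Hence $f(x_{k}) \le f\!\left(\frac{k}{k+2}x_{k-1} + \frac{2}{k+2}s_k\right)$, which is exactly the condition required of the "final update" step~\ref{update} of the generic CGM (Algorithm~\ref{AlgFW}). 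Inexactness in the~\eqref{LMO} step is absorbed by~\eqref{approx}, and note that \improve{} operating only on the locations (holding weights fixed) keeps the iterate feasible, so no feasibility issue arises.

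With these two checks in place, the conclusion follows by quoting the inexact CGM bound: for a differentiable convex $f$ with curvature $C$ on a compact convex set, if at iteration $k$ the linear subproblem is solved to additive precision $C\linac/(k+2)$ and the step is followed by any objective-nonincreasing feasible update, then $f(x_k) - f_\star \le \frac{2C}{k+2}(1+\linac)$. Since $f(x_k) = \ell(\fwd\mu_k - y)$ and $f_\star = \ell_\star$ by the objective-preserving equivalence of~\eqref{cvxproblem} and~\eqref{finiteproblem}, this is precisely the claimed estimate. The main obstacle — really the only non-bookkeeping point — is the verification that the weight-recomputation step~\ref{update} dominates the textbook convex-combination update: one must exhibit that the point $\frac{k}{k+2}x_{k-1} + \frac{2}{k+2}(\tau\delta_{\theta_k})$ is in the feasible set of the finite-dimensional weight problem~\eqref{computeW} solved at iteration $k$ (it is, since $\mu_{k-1}$ is supported on $S_{k-1}\subseteq S_k$ and adding the atom $\theta_k$ with appropriate mass keeps total mass $\le\tau$), so that the argmin in~\eqref{computeW} attains an objective value no larger than the CGM tentative iterate; everything else is a direct citation of the existing CGM theory.
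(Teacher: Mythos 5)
Your proposal is correct and follows essentially the same route as the paper: push the iterates through $\Phi$ to the finite-dimensional problem~\eqref{finiteproblem}, check that the linear minimization steps coincide and that the coordinate-descent block is a valid objective-nonincreasing final update, then cite Jaggi's inexact CGM bound. In fact you spell out the one step the paper leaves implicit---that the tentative convex combination $\frac{k}{k+2}\Phi\mu_{k-1} + \frac{2}{k+2}\,\tau\psf(\theta_k)$ is realizable as a measure feasible for the weight subproblem on $S_k$, so the fully-corrective/local-search update dominates it---which is exactly the right justification.
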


\begin{proof}

We first show that the points $\Phi \mu_1, \Phi \mu_2, \ldots$ are iterates of the standard CGM (with a particular choice of the final update step) applied to the finite-dimensional problem~\eqref{finiteproblem}.
We then appeal to~\cite{jaggi} to complete the proof.

Suppose that $\Phi \mu_k = x_k$. We show that the linearization step in both algorithms produces the same result (up to the equivalence mentioned earlier).
Let $\theta_{k+1} = \argmin_{\theta \in \Theta} \langle \psf(\theta), \nabla \ell(\Phi \mu_k - y) \rangle$ be the output of step~\ref{gba} of \OurAlg.  Let $s_k$ be the output of the linear minimization step of the standard CGM applied to~\eqref{finiteproblem} starting at $x_k$. Then
\[s_k = \argmin_{s \in \tau \domain} \langle s, \nabla \ell(x_k - y) \rangle.\]
Recalling that $\domain =\{ \Phi \mu \mid \mu \ge 0,~\mu(\Theta) \le 1\}$, we must have $s_k = \tau \psf (\theta_k)$.  Therefore, the linear minimization steps of the standard CGM and \OurAlg~coincide.

We now need to show that the nonconvex coordinate descent step in \OurAlg~ is a valid final update step for the standard CGM applied to~\eqref{finiteproblem}. This is clear as the coordinate descent step does at least as well as the fully-corrective step.  We can hence appeal to the results of Jaggi~\cite{jaggi} that bound the convergence rate of the standard CGM on finite-dimensional problems to finish the proof.
\end{proof}



\section{Numerical results}
\label{SecNumerics}

In this section we apply \OurAlg~to three of the examples in~\S\ref{SecExamples}:
superresolution fluorescence microscopy, matrix completion, and system identification. 
We have made a simple implementation of \OurAlg~publicly available on github: 
\begin{center}
\url{https://github.com/nboyd/SparseInverseProblems.jl}.
\end{center} 
This allows the interested reader to follow along with these examples, and, hopefully, to apply \OurAlg~to other instances of~\eqref{cvxproblem}.

For each example we briefly describe how we implement the required subroutines for \OurAlg, though again the interested reader may want to consult our code for the full picture.
We then describe how \OurAlg~compares to prior art.
Finally, we show how \OurAlg~improves on the standard fully-corrective conditional gradient method for measures (\FWM{}) and a variant of the gradient flow algorithm (GF) proposed in~\cite{Austrian}.
While the gradient flow algorithm proposed in~\cite{Austrian} does not solve the finite-dimensional convex problem at each step, our version of GF does.
We feel that this is a fair comparison: intuitively, fully solving the convex problem can only improve the performance of the GF algorithm.
All three experiments require a subroutine to solve the finite-dimensional convex optimization problem over the weights.
For this we use a simple implementation of a primal-dual interior point method, which we include in our code package.

For each experiment we select the parameter $\tau$ by inspection. For matrix completion and linear system ID this means using a validation set. For single molecule imaging each image requires a different value of $\tau$. For this problem, we run \OurAlg~ with a large value of $\tau$ and stop when the decrease in the objective function gained by the addition of a source falls below a threshold. This heuristic can be viewed as post-hoc selection of $\tau$ and the stopping tolerance $\epsilon$, or as a stagewise algorithm~\cite{tibshirani}.

The experiments are run on a standard c4.8xlarge EC2 instance.
Our naive implementations are meant to demonstrate that \OurAlg~is easy to implement in practice and finds high-quality solutions to~\eqref{cvxproblem}.
For this reason we do not include detailed timing information.

\subsection{Superresolution fluorescence microscopy}
\label{SecSMI}
We analyze data from the Single Molecule Localization Microscopy (SMLM) challenge~\cite{SMIpaper,SMIChallenge}.  
Fluorescence microscopy is an imaging technique used in the biological sciences to study subcellular structures in vivo.  
The task is to recover the 2D positions of a collection of fluorescent proteins from images taken through an optical microscope.  

Here we compare the performance of our \OurAlg~to the gridding approach of Tang~\emph{et al}~\cite{JustDiscretize},  two algorithms from the microscopy community (quickPALM and center of Gaussians), and also CGM and the gradient flow (GF) algorithm proposed by~\cite{Austrian}.    
The gridding approach approximately solves the continuous optimization problem~\eqref{cvxproblem} by discretizing the space $\Theta$ into a finite grid of candidate point source locations and running an $\ell_1$-regularized regression.  
In practice there is typically a small cluster of nonzero weights in the neighborhood of each true point source. With a fine grid, each of these clusters contains many nonzero weights, yielding many false positives.
 
To remove these false positives, Tang~\emph{et al} propose a heuristic post-processing step that involves taking the center of mass of each cluster.
This post-processing step is hard to understand theoretically, and does not perform well with a high-density of fluorophores.

\subsubsection{Implementation details} For this application, the minimization required in step \ref{gba} of \OurAlg~is not difficult: the parameter space is two-dimensional.  Coarse gridding followed by a local optimization method works well in theory and practice.  

For $\improve$ we use a standard constrained gradient method provided by the NLopt library~\cite{NLopt}.

\subsubsection{Evaluation}
We measure localization accuracy by computing the $F_1$ score, the harmonic mean of precision and recall, at varying radii.
Computing the precision and recall involves first matching estimated point sources to true point sources---a difficult task.
Fortunately, the SMLM challenge website~\cite{SMIChallenge} provides a stand-alone application that we use to compute the $F_1$ score.

We use a dataset of 12000 images that overlay to form simulated microtubules (see Figure~\ref{FigTubes}) available online at the SMLM challenge website~\cite{SMIChallenge}.
There are 81049 point sources in total, roughly evenly distributed across the images.  Figure~\ref{single} shows a typical image.
Each image covers an area 6400nm across, meaning each pixel is roughly 100nm by 100nm. 

Figure~\ref{FigLongSequence} compares the performance of \OurAlg, gridding, quickPALM, and center of Gaussians (CoG) on this dataset.  We match the performance of the gridding algorithm from~\cite{JustDiscretize}, and significantly beat both quickPALM and CoG.
Our algorithm analyses all images in well under an hour---significantly faster than the gridding approach of~\cite{JustDiscretize}.
Note that the gridding algorithm of~\cite{JustDiscretize} does not work without a post-processing step.

\begin{figure}
\centering
\begin{subfigure}{.4\textwidth}
\includegraphics[width=\textwidth]{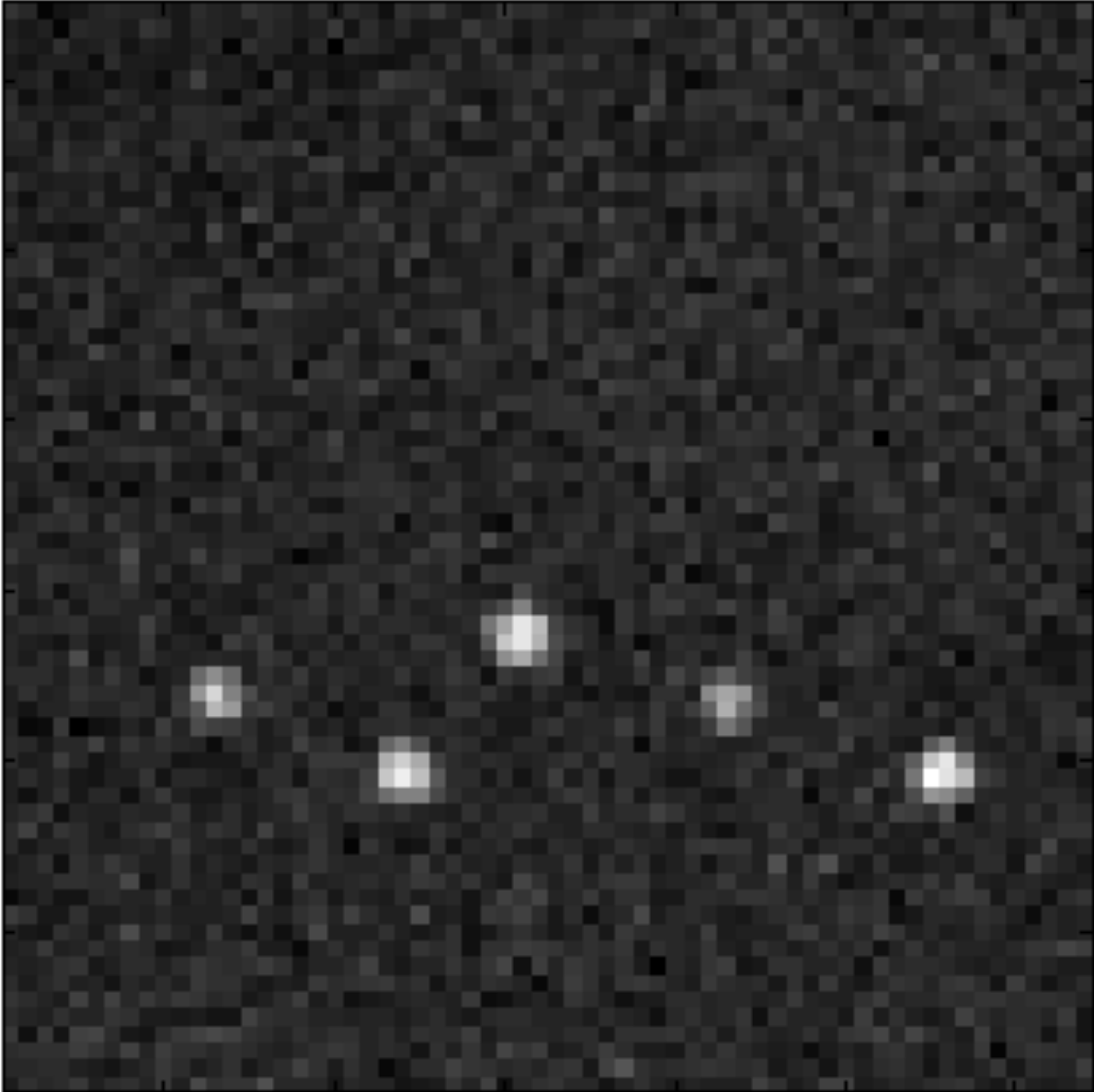}
\caption{}
\label{single}
\end{subfigure}
\begin{subfigure}{0.4\textwidth}
\includegraphics[width=\textwidth]{./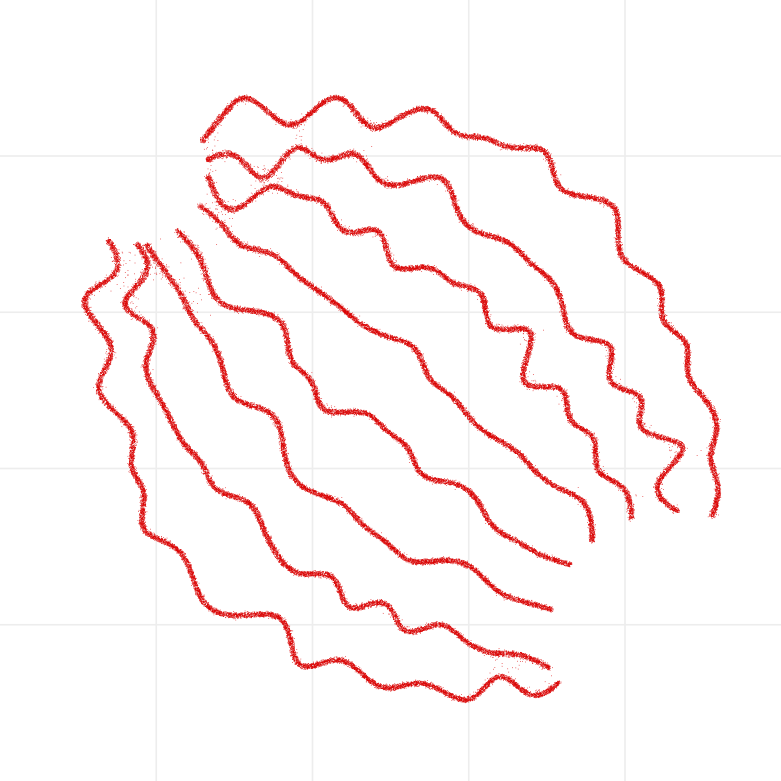}
\caption{}
\label{large}
\end{subfigure}
\caption{The long sequence dataset contains 12000 images similar to~(a).  The recovered locations for all the images are displayed in (b).}
\label{FigTubes}
\end{figure}

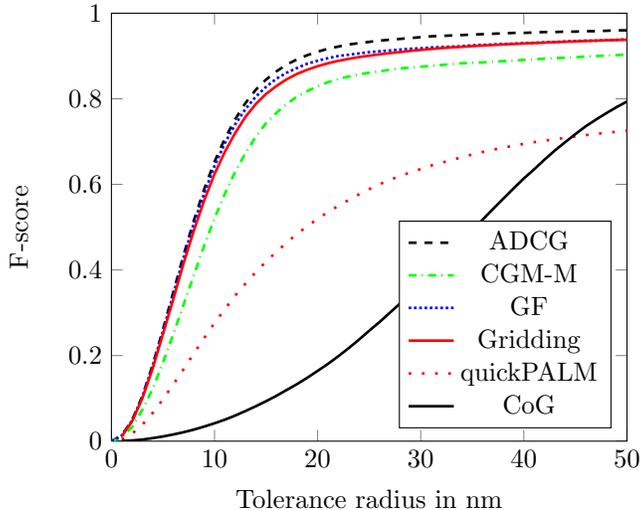
\begin{figure}
        \centering
\begin{tikzpicture}[scale=1] 
\begin{groupplot}[group style={group size=1 by 1,horizontal sep=1cm,xlabels at=edge bottom, ylabels at=edge left,xticklabels at=edge bottom},xlabel=Tolerance radius in nm,
        ylabel=F-score,legend pos = south east,ymin=-.1,ymax=1.1]
 \nextgroupplot[xmin=0,xmax=50, ymin=0,ymax=1]
 	\addplot [black,dashed,line width=1pt] table[col sep = comma, row sep = \\] {./csvFiles/smi_adcg.csv};\addlegendentry{\OurAlg}
	\addplot [green,dashdotted,line width=1pt] table[col sep = comma, row sep = \\] {./csvFiles/smi_fwam.csv};\addlegendentry{\FCFW}
 	\addplot [blue,densely dotted,line width=1pt] table[col sep = comma, row sep = \\] {./csvFiles/smi_gf.csv};\addlegendentry{GF}
	\addplot [red,line width=1pt] table[col sep = comma, row sep = \\] {./csvFiles/sparseSR.csv};\addlegendentry{Gridding}
        \addplot [red,loosely dotted,line width=1pt] table[col sep = comma, row sep = \\] {./csvFiles/quickPALM.csv};\addlegendentry{quickPALM}
        \addplot [black,line width=1pt] table[col sep = comma, row sep = \\] {./csvFiles/CoG.csv};\addlegendentry{CoG}
        \end{groupplot}
\end{tikzpicture}
\caption{{\bf Performance on bundled tubes: long sequence.}  F-scores at various radii for 6 algorithms. For reference, each image is 6400nm across, meaning each pixel has a width of 100nm. ADCG outperforms all competing methods on this dataset.}
\label{FigLongSequence}
\end{figure}

\subsection{Matrix completion}

As described in~\S\ref{SecExamples}, matrix completion is the task of estimating an approximately low rank matrix from some of its entries. We test our proposed algorithm on the Netflix Prize dataset, a standard benchmark for matrix completion algorithms. 

\subsubsection{Implementation details}

Although the parameter space for this example is high-dimensional we can still compute the steepest descent step over the space of measures.
The optimization problem we need to solve is:
 \[ \minimize_{\|a\|_2 = \|b\|_2 = 1} \langle \psi(a,b), \nu \rangle  = \langle M(ab^T), \nu \rangle = \langle ab^T, M^*(\nu) \rangle.\]

In other words, we need to find the unit norm, rank one matrix with highest inner product with the matrix $M^* \nu$. The solution to this problem is given by the top singular vectors of $M^* \nu$. Computing the top singular vectors using a Lanczos method is relatively easy as the matrix $M^* \nu$ is extremely sparse. 

Our implementation of $\improve$ takes a single step of gradient descent with line-search.

\subsubsection{Evaluation}
Our algorithm matches the state of the art for nuclear norm based approaches on the Netflix Prize dataset. Briefly, the task here is to predict the ratings 480,189 Netflix users give to a subset of 17,770 movies. One approach has been to phrase this as a matrix completion problem.
That is, to try to complete the 480,189 by 17,770 matrix of ratings from the observed entires. Following~\cite{jellyfish} we subtract the mean training score from all movies and truncate the predictions of our model to lie between $1$ and $5$.

Figure~\ref{matrixcompletion} shows root-mean-square error (RMSE) of our algorithm and other variants of the CGM on the Netflix probe set.
Again, \OurAlg~outperforms all other CGM variants.
Our algorithm takes over 7 hours to achieve the best RMSE---this could be improved with a more sophisticated implementation, or parallelization.

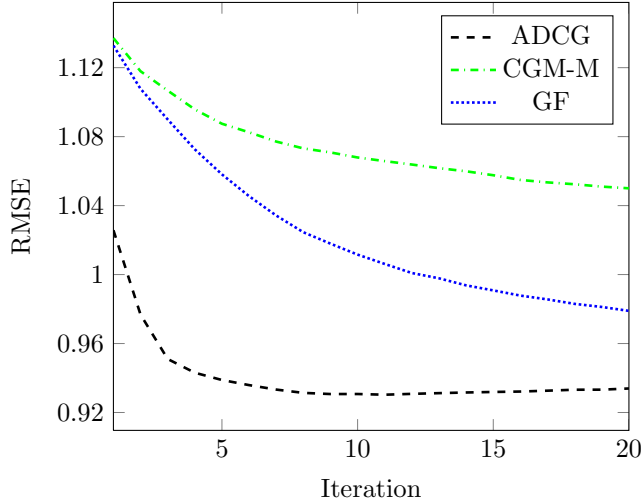
\begin{figure}
        \centering
\begin{tikzpicture}[scale=1] 
\begin{groupplot}[group style={group size=2 by 1,horizontal sep=1cm,xlabels at=edge bottom, ylabels at=edge left,xticklabels at=edge bottom},xlabel=Iteration,
        ylabel=RMSE,legend pos = north east,ytick={0.92,0.96,...,1.15}, xmin=1,xmax=20]
 \nextgroupplot[]
 	\addplot [black,dashed,line width=1pt] table[col sep = comma, row sep = \\,x index = {2}, y index = {1}] {./csvFiles/mc_adcg.csv};\addlegendentry{\OurAlg}	
 	\addplot [green,dashdotted,line width=1pt] table[col sep = comma, row sep = \\,x index = {2}, y index = {1}] {./csvFiles/mc_fwam.csv};\addlegendentry{\FCFW}	
  	\addplot [blue,densely dotted,line width=1pt] table[col sep = comma, row sep = \\,x index = {2}, y index = {1}] {./csvFiles/mc_gf.csv};\addlegendentry{GF}	

        \end{groupplot}
\end{tikzpicture}
\caption{{\bf RMSE on Netflix challenge dataset.}  ADCG significantly outperforms \FCFW.}
\label{matrixcompletion}
\end{figure}

\subsection{System identification}

In this section we apply our algorithms to identifying two single-input single-output systems from the DaISy collection~\cite{daisy}: the flexible robot arm dataset (ID 96.009) and the hairdryer dataset (ID 96.006).

\subsubsection{Implementation details}
While the parameter space is $6$-dimensional, which effectively precludes gridding, we can efficiently solve the minimization problem in step \eqref{lmo} of the ADCG. To do this, we grid only over $r$ and $\alpha$: the output is linear in the remaining parameters ($B$ and $x_0$) allowing us to analytically solve for the optimal $B$ and $x_0$ as a function of $r$, $\alpha$.

For $\improve$ we again use a standard box-constrained gradient method provided by the NLopt library~\cite{NLopt}.
\subsubsection{Evaluation}
Both datasets were generated by driving the system with a specific input and recording the output. 
The total number of samples is $1000$ in both cases.
Following~\cite{SysID} we identify the system using the first 300 time points and we evaluate performance by running the identified system forward for the remaining time points and compare our predictions to the ground truth.

\newcommand{\ypred}{y_{\text{pred}}}
\newcommand{\ymean}{y_{\text{mean}}}
We evaluate our predictions $\ypred$ using the score defined in~\cite{Lieven}.
The score is given by
\begin{equation}
\label{EqnScore}
\text{score} = 100\left(1 - \frac{\Vert \ypred  - y \Vert_2}{\Vert \ymean - y \Vert_2}\right),
\end{equation}
where $\ymean$ is the mean of the test set $y$.

Figure~\ref{RobotID} shows the score versus the number of sources as we run our algorithm.  For reference we display with horizontal lines the results of~\cite{Lieven}. \OurAlg~matches the performance of~\cite{Lieven} and exceeds that of all other CGM variants. Our simple implementation takes about an hour, which compares very poorly with the spectral methods in~\cite{Lieven} which complete in under a minute.

\begin{figure}
        \centering
\begin{tikzpicture}[scale=1] 
\begin{groupplot}[group style={group size=2 by 1,horizontal sep=1cm,xlabels at=edge bottom, ylabels at=edge left,xticklabels at=edge bottom},xlabel=Iteration,
        ylabel=score,legend pos = south west]
 \nextgroupplot[title={Hair dryer},ymin =0,ymax = 100,xmin=1, xmax=15]
 	\addplot [black,dashed,line width=1pt] table[col sep = comma, row sep = \\,x index = {0},y index = {1}] {./csvFiles/dryer_adfw.csv};\addlegendentry{\OurAlg}
	\addplot [green,dashdotted,line width=1pt] table[col sep = comma, row sep = \\,x index = {0},y index = {1}] {./csvFiles/dryer_fwam.csv};\addlegendentry{\FCFW}
 	\addplot [blue,densely dotted,line width=1pt] table[col sep = comma, row sep = \\,x index = {0},y index = {1}] {./csvFiles/dryer_gf.csv};\addlegendentry{GF}
	\addplot[mark = none,blue,domain=0:15] {86.3};
\nextgroupplot[title={Flexible robot arm},ymin =0,ymax = 100,xmin=1,xmax=15]
	\addplot [black,dashed,line width=1pt] table[col sep = comma, row sep = \\, x index = 0, y index = 1] {./csvFiles/robot_arm_adfw.csv};\addlegendentry{\OurAlg}
	\addplot [green,dashdotted,line width=1pt] table[col sep = comma, row sep = \\,x index = {0},y index = {1}] {./csvFiles/robot_arm_fwam.csv};\addlegendentry{\FCFW}
 	\addplot [blue,densely dotted,line width=1pt] table[col sep = comma, row sep = \\,x index = {0},y index = {1}] {./csvFiles/robot_arm_gf.csv};\addlegendentry{GF}
	\addplot[mark = none,blue,domain=-10:20] {96.0};
        \end{groupplot}
\end{tikzpicture}
\caption{{\bf Performance on DaISy datasets. } \OurAlg~outperforms other CGM variants and matches the nuclear-norm based technique of~\cite{SysID}.}
\label{RobotID}
\end{figure}
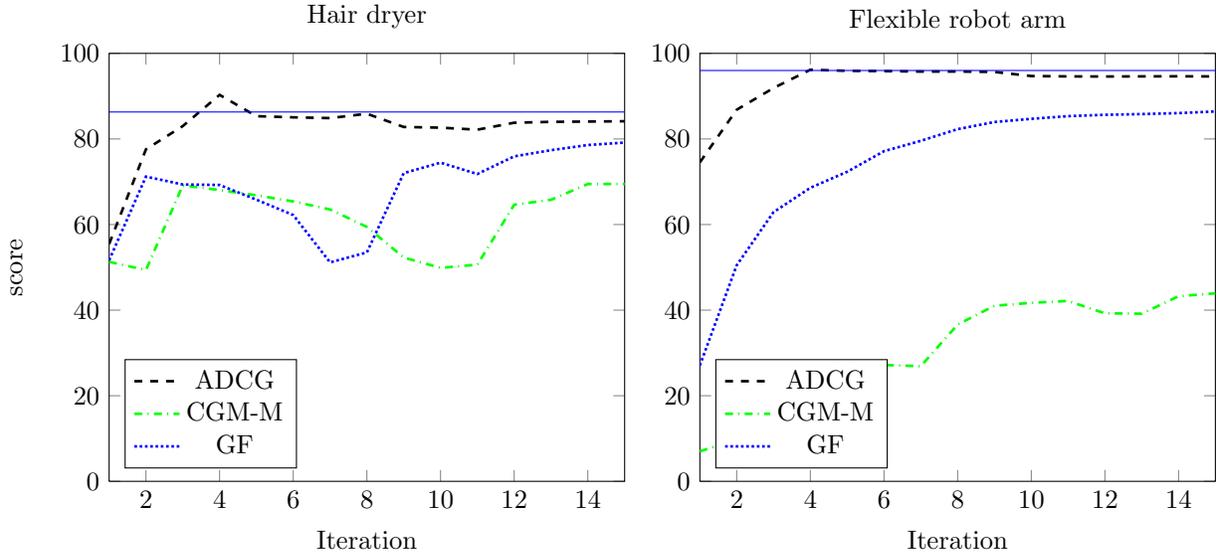


\section{Conclusions and future work}

As demonstrated in the numerical experiments of \S\ref{SecNumerics}, \OurAlg~achieves state of the art performance in superresolution fluorescence microscopy, matrix completion, and system identification, without the need for heuristic post-processing steps. The addition of the nonconvex local search step $\improve$ significantly improves performance relative to the standard conditional gradient algorithm in all of the applications investigated.  In some sense, we can understand~\OurAlg~as a method to rigorously control local search.  One could just start with a model expansion~\eqref{EqnForwardOperator} and perform nonconvex local search.  However, this fares far worse than~\OurAlg~in practice and has no theoretical guarantees. The \OurAlg~framework provides a clean way to generate a globally convergent algorithm that is practically efficient.  Understanding this coupling between local search heuristics and convex optimization leads our brief brief discussion of future work.

\paragraph{Tighten convergence analysis for \OurAlg.} The conditional gradient method is a robust technique, and adding our auxiliary local search step does not change its convergence rate.  However, in practice, the difference between the ordinary conditional gradient method, the fully corrective variants, and \OurAlg~are striking.
In many of our experiments, \OurAlg~outperforms the other variants by appreciable margins.  Yet, all of these algorithms share the same upper bound on their convergence rate.  A very interesting direction of future work would be to investigate if the bounds for \OurAlg~can be tightened at all to be more predictive of practical performance.  There may be connections between our algorithm and other alternating minimization techniques popular in matrix completion~\cite{keshavan2012efficient, jain2013low}, sparse coding~\cite{agarwal2013learning,arora2015simple}, and phase retrieval~\cite{netrapalli2013phase}, and perhaps the techniques from this area could be applied to our setting of sparse inverse problems.  

\paragraph{Relaxation to clustering algorithms.} Another possible connection that could be worth exploring is the connection between the CGM and clustering algorithms like k-means.  Theoretical bounds have been devised for initialization schemes for clustering algorithms that resemble the first step of CGM~\cite{Arthur07, Ostrovsky12}.  In these methods, k-means is initialized by randomly seeking the points that are farthest from the current centers.  This is akin to the first step of CGM which seeks the model parameters that best describe the residual error.  Once a good seeding is acquired, the standard Lloyd iteration for k-means can be shown to converge to the global optimal solution~\cite{Ostrovsky12}.   It is possible these analyses could be generalized to analyze our version of CGM or inspire new variants of the CGM.

\paragraph{Connections to cutting plane methods and semi-infinite programs.}
The standard Lagrangian dual of~\eqref{cvxproblem} is a semi-infinite program (SIP), namely an optimization problem with a finite dimensional decision variable but an infinite collection of constraints~\cite{hettich1993semi,shapiro2009semi}.  One of the most popular algorithmic techniques for SIP is the cutting plane method, and these methods qualitatively act very much like the CGM. Exploring this connection in detail could generate variants of cutting plane methods suited for continuous constraint spaces.  Such algorithms could be valuable tools for solving semi-infinite programs that arise in contexts disjoint from sparse inverse problems.

\paragraph{Other applications.}  
We believe that our techniques are broadly applicable to other sparse inverse problems, and hope that future work will explore the usefulness of~\OurAlg~in  areas unexplored in this paper.  To facilitate the application of~\OurAlg~to more problems, such as those described in \S\ref{SecExamples}, we have made our code publicly available on GitHub.  As described in \S\ref{SecAlgorithms}, implementing \OurAlg~for a new application essentially requires only two user-specified subroutines: one routine that evaluates the the measurement model and its derivatives at a specified set of weights and model parameters, and one that approximately solves the linear minimization in step~\ref{gba} of \OurAlg.  We aim to investigate several additional applications in the near future to test the breadth of the efficacy of \OurAlg.

 \section*{Acknowledgements}
We would like to thank Elina Robeva and Stephen Boyd for many useful conversations about this work.

BR is generously supported by ONR awards N00014-11-1-0723 and N00014-13-1-0129, NSF awards CCF-1148243 and CCF-1217058, AFOSR award FA9550-13-1-0138, and a Sloan Research Fellowship.  GS was generously supported by NSF award CCF-1148243.  NB was generously supported by a Google Fellowship from the Hertz Foundation.  This research is supported in part by NSF CISE Expeditions Award CCF-1139158, LBNL Award 7076018, and DARPA XData Award FA8750-12-2-0331, and gifts from Amazon Web Services, Google, SAP, The Thomas and Stacey Siebel Foundation, Adatao, Adobe, Apple, Inc., Blue Goji, Bosch, C3Energy, Cisco, Cray, Cloudera, EMC2, Ericsson, Facebook, Guavus, HP, Huawei, Informatica, Intel, Microsoft, NetApp, Pivotal, Samsung, Schlumberger, Splunk, Virdata and VMware.

\printbibliography

\end{document}